\documentclass[12pt, twoside, reqno]{article}
\usepackage{amsmath,amsthm}
\usepackage{amssymb}
\usepackage{enumitem}

\pagestyle{myheadings} \markboth{}{A conjecture of Erd\H os on
$p+2^k$}
\newtheorem{theorem}{Theorem}[section]
\newtheorem{corollary}[theorem]{Corollary}
\newtheorem{lemma}[theorem]{Lemma}

\newtheorem{problem}[theorem]{Problem}

\theoremstyle{definition}

\newtheorem{remark}[theorem]{Remark}

\newtheorem{conjecture}[theorem]{Conjecture}



\numberwithin{equation}{section}

\frenchspacing

\textwidth=13.5cm
\textheight=23cm
\parindent=16pt
\oddsidemargin=-0.5cm
\evensidemargin=-0.5cm
\topmargin=-0.5cm



\begin{document}

\baselineskip=17pt


\title{A conjecture of Erd\H os on $p+2^k$}

\author{Yong-Gao Chen\\
\small School of Mathematical Sciences and Institute of Mathematics\\
\small Nanjing Normal University,  Nanjing  210023,  P.R. China\\
\small ygchen@njnu.edu.cn}
\date{}

\maketitle


\begin{abstract} Let $\mathcal{U}$ be the set of positive odd integers that cannot
be represented as the sum of a prime and a power of two.  In this
paper, we prove that  $\mathcal{U}$ is not a union of finitely
many infinite arithmetic progressions and a set of asymptotic
density zero. This gives a negative answer to a conjecture of P.
Erd\H os. We pose several problems and a conjecture for further
research.
\end{abstract}

\renewcommand{\thefootnote}{}

{\bf Mathematical Subject Classification (2020).}  11A41; 11B13;
11B25.

{\bf Keywords:} Erd\H os' conjectures; arithmetic progressions;
primes; asymptotic density; Mersenne primes

\renewcommand{\thefootnote}{\arabic{footnote}}
\setcounter{footnote}{0}

\section{Introduction}

Let $\mathcal{P}$ be the set of all positive primes and
$\mathbb{N}$
 the set of all positive integers. In 1849, de Polignac
\cite{Polignac1849} conjectured that every odd integer greater
than $3$ can be represented as $p+2^k$,  where $p\in \mathcal{P}$
and $k\in \mathbb{N}$.  In 1934, Romanoff \cite{Romanoff1934}
 showed that there is a positive
proportion of positive odd integers that can be written as
$p+2^k$, where $p\in \mathcal{P}$ and $k\in \mathbb{N}$. In 2004,
Chen and Sun \cite{ChenSun2004} gave a quantitative version of
Romanoff's theorem. In 2018, Elsholtz and Schlage-Puchta
\cite{Elsholtz2018} showed that the proportion is at least
$0.107648$. In 1950, P. Erd\H os proved that there is an infinite
arithmetic progression of positive odd integers none of which can
be represented as the sum of a prime and a power of two. This
topic has brought many  subsequent works (Chen \cite{Chen2005},
Chen \cite{Chen2007}, Chen and Sun \cite{ChenSun2004}, Chen and Xu
\cite{ChenXu2024}, van der Corput \cite{Corput1950}, Crocker
\cite{Crocker1971}, Ding \cite{YDing2022}, Habsieger and Roblot
\cite{HabsiegerRoblot2006}, L\"u \cite{Lu2007}, Pan
\cite{Pan2011}, Pintz \cite{Pintz2006}, Sun \cite{XGSun2010}, Sun
\cite{Sun2000}, etc.).

Let $\mathcal{U}$ be the set of positive odd integers that cannot
be represented as $p+2^k$,  where $p\in \mathcal{P}$ and $k\in
\mathbb{N}$. A simple calculation shows that $\mathcal{U} =\{ 1
,3, 127, 149, 251, 331, \dots \}$. Erd\H os (see \cite{Bloom} or
\cite{Erdos1995}) posed the following conjecture:

{\bf Conjecture A} {\it The set $\mathcal{U}$ is the union of an
infinite arithmetic progression of positive odd integers and a set
of asymptotic density zero.}

 Erd\H os called this
conjecture ``rather silly".  This is Problem 16 of Bloom's list
\cite{Bloom} of Erd\H os' problems.  In this paper, we prove that
this conjecture of Erd\H os is false in the following stronger
form.

\begin{theorem}\label{thm1} For any
set $S$ of asymptotic density zero,  $\mathcal{U}\cup S$ is not a
union of finitely many infinite arithmetic progressions and a set
of asymptotic density zero.
\end{theorem}

From Theorem \ref{thm1}, by taking $S=\emptyset $ we have the
following result immediately.

\begin{corollary}\label{cor1} The set $\mathcal{U}$ is not a
union of finitely many infinite arithmetic progressions and a set
of asymptotic density zero.
\end{corollary}

Now we concern infinite arithmetic progressions $$\{ mh+a :
h=0,1,\dots \} \subseteq \mathcal{U}.$$ Firstly, we obtain the
following results.

\begin{theorem}\label{thm1b} We have $$\min m=11184810,\quad \min \omega (m)=7$$ and $\omega (m)=7$ if and
only if $m=11184810$, where the minimum is taken over all infinite
arithmetic progressions $\{ mh+a : h=0,1,\dots \}$ for which  $\{
mh+a : h=0,1,\dots \} \setminus \mathcal{U}$ has asymptotic
density zero and $\omega (m)$ denotes the number of distinct prime
divisors of $m$.
\end{theorem}

\begin{corollary}\label{cor2}  We have $$\min m=11184810,\quad \min \omega (m)=7$$ and $\omega (m)=7$ if and
only if $m=11184810$, where the minimum is taken over all infinite
arithmetic progressions $$\{ mh+a : h=0,1,\dots \} \subseteq
\mathcal{U}.$$ \end{corollary}

Very recently, Chen, Dai and Li \cite{ChenDaiLiarXiv} answered a
my problem affirmatively by proving that $\min m=11184810$, where
the minimum is taken over all infinite arithmetic progressions $\{
mh+a : h=0,1,\dots \} \subseteq \mathcal{U}$. Their proof depends
on a heavily calculation. They also gave some information on $\{
11184810h+a : h=0,1,\dots \} \subseteq \mathcal{U}$.\\

In this paper, we introduce the following notations. An infinite
arithmetic progression $\{ mh+a : h=0,1,\dots \}$ is called
\emph{quasi-non-representable} if $a>0$ and $\{ mh+a : h=0,1,\dots
\} \setminus \mathcal{U}$ has asymptotic density zero.  A
quasi-non-representable infinite arithmetic progression $\{ mh+a :
h=0,1,\dots \}$ is called  \emph{longest } if there is no
quasi-non-representable infinite arithmetic progression $\{ m'h+a'
: h=0,1,\dots \} $ such that $\{ mh+a : h=0,1,\dots \}$ is a
proper subset of $\{ m'h+a' : h=0,1,\dots \} $.

\begin{theorem}\label{thm1c} $\{ 11184810h+a : h=0,1,\dots \}$ is  a
longest quasi-non-representable infinite arithmetic progression if
and only if
\begin{eqnarray}\label{list} a\in &\{ & 509203, 762701,
992077, 1247173, 1254341, 1330207,  \nonumber\\
&& 1330319, 1730653, 1730681, 1976473, 2313487, 2344211, \nonumber\\
&& 2554843, 3177553, 3292241, 3419789, 3423373, 3661529, \nonumber\\
&& 3661543, 3784439, 4384979, 4442323, 4506097, 4507889, \nonumber\\
&&4626967, 5049251, 5050147, 6610811, 7117807, 7576559, \nonumber\\
&& 7629217, 8086751, 8101087,  8252819, 8253043,
8643209,\nonumber\\ && 9053711, 9053767, 9545351, 9560713,
9666029, 10219379, \nonumber\\ && 10280827, 10581097, 10609769,
10702091, 10913233,\nonumber\\ && 10913681\} .
\end{eqnarray}
\end{theorem}

\begin{corollary}\label{cor2a} Let $b$ be an integer. Then $$\{ 11184810h+b : h=0,1,\dots \} \subseteq
\mathcal{U}$$ if and only if $b\ge 0$ and $b\equiv
a\pmod{11184810}$ for some $a$ in the list \eqref{list}.
\end{corollary}

Now, I pose the following problems for further research:

\begin{problem}\label{prob0} Is there a constant $c$ such that if
$$\{ mh+a : h=0,1,\dots \} \subseteq \mathcal{U},$$
then the least odd prime divisor of $m$ is less than $c$?
\end{problem}

\begin{problem}\label{prob1} Let $A_i$ $(i\in I)$ be the collection of
all infinite arithmetic progressions of positive odd integers none
of which can be represented as the sum of a prime and a power of
two. Is it true that the set
$$\mathcal{U}\setminus \Big( \bigcup_{i\in I} A_i\Big)$$
has a positive upper (lower) asymptotic density? \end{problem}

 We claim that
$$\mathcal{U}\setminus \Big( \bigcup_{i\in I} A_i\Big)\not=\emptyset .$$
We will show that
\begin{equation}\label{w2a}\{ 1 ,3, 127\} \subseteq\mathcal{U}\setminus \Big( \bigcup_{i\in I} A_i\Big).\end{equation}
Let $a\in \{ 1 ,3, 127\} $. Then $a=2^k-1$ for some positive
integer $k$. Suppose that
$$a\notin  \mathcal{U}\setminus \Big( \bigcup_{i\in I} A_i\Big).$$
By $a\in \mathcal{U}$, there an $i\in I$ such that $a\in A_i$, say
$$a\in \{ mh+b: h=0,1,\dots \} \subseteq
\mathcal{U}.$$ It follows that
\begin{equation}\label{w2} \{ mh+a: h=0,1,\dots \} \subseteq
\mathcal{U}.\end{equation} Since $(m, a-2^k)=(m,-1)=1$, it follows
from Dirichlet's theorem on arithmetic progressions that there are
infinitely many primes $p\equiv a-2^k\pmod{m}$, i.e.  $p+2^k\equiv
a\pmod{m}$, a contradiction with \eqref{w2}.

Generally, we have the following result.

\begin{theorem}\label{thm2} Let the notations be as in Problem \ref{prob1} and let $a\in \mathcal{U}$.
Then  $a\in \cup_{i\in I} A_i$ if and only if there is an integer
$m>1$ such that $(a-2^k, m)>1$ for every positive integer $k$.
\end{theorem}

It is easy to see that Theorem \ref{thm2} is equivalent to the
following result.

\begin{corollary} Let the notations be as in Problem \ref{prob1}.
Then  $a\in \mathcal{U}\setminus \left( \cup_{i\in I} A_i\right)$
if and only if $a\in \mathcal{U}$ and $p(a-2^k)$ is unbound, where
$p(n)$ denotes the least prime divisor of $n$ and we appoint
$p(\pm 1)=+\infty$.
\end{corollary}

Now, I pose the following  problems for further research:

\begin{problem}\label{prob2a} Is there a positive odd integer $a$
such that $2^\ell-a$ is composite for all sufficiently large
integers $\ell $ but there are no integers $m>1$ with $(a-2^k,
m)>1$ for every positive integer $k$?
\end{problem}

\begin{problem}\label{prob2} What is the least integer in $\cup_{i\in I} A_i$?
That is, what is the least integer $e_1$ in $\mathcal{U}$ such
that there is an integer $m>1$ with $(e_1-2^k, m)>1$ for every
positive integer $k$?
\end{problem}

\begin{problem}\label{prob3} What is the least positive odd integer $e_2$  such that
there is an integer $m>1$ with  $(e_2-2^k, m)>1$ for every
positive integer $k$?
\end{problem}

If there are only finitely many Mersenne primes, then  $2^\ell-1$
is composite for all sufficiently large integers $\ell $, but
there are no integers $m>1$ with $(1-2^1, m)>1$. So Problem
\ref{prob2a} is affirmative under the assumption that there are
only finitely many Mersenne primes.   By \eqref{w2a} and
$\mathcal{U} =\{ 1 ,3, 127, 149, 251, 331, \dots \}$, we have
$e_1\ge 149$. Noting that $(1-2^1, m)=1$, $(3-2^1, m)=1$, $(5-2^2,
m)=1$, $(7-2^3, m)=1$ and $(9-2^3, m)=1$ for any integer $m$, we
know that $e_2\ge 11$. Currently, we cannot prove that $e_1\not=
149$ and $e_2\not= 11$. Is it true that $2^k-11$ is a prime for
infinitely many positive integers $k$? Is it true that $2^k-149$
is a prime for infinitely many positive integers $k$?

Since $(509203-2^k, 2^{24}-1)>1$ for $1\le k\le 24$, it follows
that $(509203-2^k, 2^{24}-1)>1$ for every positive integer $k$.
Noting that $509203-2^k$ are composite for $1\le k\le 18$ and
$2^{19}>509203$, we have $509203\in \mathcal{U}$. Hence, $e_2\le
e_1\le 509203$. Is it true that $e_2= e_1= 509203$?

Let $\mathcal{W}_1$ be the set of all odd positive integers $a$
for which there is an integer $m>1$ with  $(a-2^k, m)>1$ for every
positive integer $k$, and let $\mathcal{W}_2$ be the set of all
positive integers $a$ for which there is no an integer $m>1$ with
$(a-2^k, m)>1$ for every positive integer $k$. By Corollary
\ref{cor2a},
$$\{ 11184810h+509203 : h=0,1,\dots \} \subseteq \mathcal{U}.$$ It
follows from Theorem \ref{thm2} that $$\{ 11184810h+509203 :
h=0,1,\dots \} \subseteq \mathcal{W}_1.$$

We pose a conjecture here for further research.

\begin{conjecture} The set $\mathcal{W}_2$ has a positive lower asymptotic density.\end{conjecture}

Now we give more information on $\cup_{i\in I} A_i$. Let $J$ be
the set of indices $i$ that the common difference of $A_i$ is the
product of a power of $2$ and a squarefree odd integer and let $K$
be the set of indices $i$ that the  common difference of $A_i$ is
a squarefree integer. We have the following result.

\begin{theorem}\label{thm3} (i) We have  $$\bigcup_{i\in I} A_i=\bigcup_{i\in J}
A_i;$$

(ii) Assume that there are infinitely many Mersenne primes. Then
$$\bigcup_{i\in I} A_i=\bigcup_{i\in K} A_i.$$
\end{theorem}

From Theorem \ref{thm3} (ii) and the comments just after Problem
\ref{prob3}, we have the following result immediately.

\begin{corollary} Either Problem
\ref{prob2a} is affirmative or
$$\bigcup_{i\in I} A_i=\bigcup_{i\in K} A_i$$
or both.
\end{corollary}

In this paper, we define $a\pmod u=\{ uh+a : h\in \mathbb{Z} \} $.
For an odd prime $p$, let $r_p$ be the multiplicative order of $2$
modulo $p$.

This paper is organized as follows. In Section \ref{Thm1sec}, we
give a proof of Theorem \ref{thm1}. In Section
\ref{ConjectureAsec}, we present a different negative answer to
Conjecture A. Although Theorem \ref{thm1} implies  that Conjecture
A is false, we believe that the proof in this section is also
interesting enough. In Section \ref{thm1bsec}, we give  proofs of
Theorem \ref{thm1b}, Theorem \ref{thm1c} and their corollaries.
Basing on Theorem \ref{thm1b}, we also give a negative answer to
Conjecture A. In the final section, we give proofs of Theorems
\ref{thm2} and \ref{thm3}.

\section{Proof of Theorem \ref{thm1}}\label{Thm1sec}

 We will prove Theorem \ref{thm1} by  contradiction.  Suppose
that Theorem \ref{thm1} is false. Then there is a set $S$ of
asymptotic density zero such that $\mathcal{U}\cup S$ is a union
of finitely many infinite arithmetic progressions and a set of
asymptotic density zero.    That is, $\mathcal{U}\cup S$ can be
written as
\begin{equation}\label{g1}\mathcal{U}\cup S=\bigcup_{i=1}^t\{ m_ih+a_i : h=0,1,\dots \} \cup
W,\end{equation} where $m_i, a_i$ $(1\le i\le t)$ are  positive
integers and $W$ has asymptotic density zero. It is clear that
every $m_i$ is even, otherwise, for any integer $h$, one of
$m_ih+a_i$ and $m_i(h+1)+a_i$ is even, a contradiction with $\{
m_ih+a_i : h=0,1,\dots \} \subseteq \mathcal{U}\cup S$. Since
there is an infinite arithmetic progression of positive odd
integers none of which can be represented as the sum of a prime
and a power of two (see \cite{Erdos1950}), we have $t\ge 1$.

We will find distinct primes $p_1, \dots , p_{\ell +3}, \dots ,
p_s$ and positive integers $\alpha, a, c$ such that

(i) $m_1\cdots m_t \mid (p_1\cdots p_s)^\alpha $;

(ii) $p_{\ell +3}\nmid m_i$ for all $1\le i\le t$;

(iii) for any positive integer $k$, $a-2^k$ can be divided by at
least one of $p_1,\dots , p_{\ell +3}$.

(iv) $p_i \nmid a-2^c$ for all $1\le i\le s$ with $i\not= \ell
+3$.\\

Suppose that we have found distinct primes $p_1, \dots , p_{\ell
+3}, \dots , p_s$ and positive integers $\alpha, a, c$ satisfying
(i)-(iv).

By (i), one of $p_1, \dots , p_{\ell +3}, \dots , p_s$ is $2$. In
view of (ii), $p_{\ell +3}\not= 2$. It follows from (iv) that
$2\nmid a-2^c$. So $a$ is odd. Let
$$n\in \{ (p_1\cdots
p_s)^\alpha h+a : h=0,1,\dots \} .$$ Then $n$ is odd. We will show
that
\begin{equation}\label{gg1}n\in \mathcal{U}\cup \{ 2^k + p_i : k=1,2,\dots ; 1\le
i\le \ell +3 \}.\end{equation} If $n\in \mathcal{U}$, then we are
done. Suppose that $n\notin \mathcal{U}$. Then $n$ can be
represented as $n=p+2^k$, $p\in \mathcal{P}$ and $k\in
\mathbb{N}$. By (iii), there exists $1\le i\le \ell +3$ such that
$p_i\mid a-2^k$. It follows that $p=n-2^k\equiv a-2^k\equiv
0\pmod{p_i}$. So $p=p_i$. Thus,
$$n\in \{ 2^k + p_i : k=1,2,\dots ;
1\le i\le \ell +3 \} .$$ In all cases, \eqref{gg1} holds. Hence
\begin{eqnarray*}&& \{ (p_1\cdots p_s)^\alpha
h+a : h=0,1,\dots \}\\
&\subseteq & \mathcal{U}\cup \{ 2^k + p_i : k=1,2,\dots ; 1\le
i\le \ell +3 \}.\end{eqnarray*} By  \eqref{g1}, we have
\begin{eqnarray*}&& \{ (p_1\cdots p_s)^\alpha
h+a : h=0,1,\dots \}\cup S\\
& \subseteq & \mathcal{U}\cup S\cup \{ 2^k + p_i : k=1,2,\dots ;
1\le
i\le \ell +3 \}\\
&=& \bigcup_{i=1}^t\{ m_ih+a_i : h=0,1,\dots \} \cup W \\
&&  \cup \{ 2^k + p_i : k=1,2,\dots ; 1\le i\le \ell +3 \}
.\end{eqnarray*} Since
$$\{ (p_1\cdots
p_s)^\alpha h+a : h=0,1,\dots \}$$
 has asymptotic density $(p_1\cdots
p_s)^{-\alpha} $ and each of sets
$$W,\ S,\  \{ 2^k + p_i : k=1,2,\dots ; 1\le i\le \ell +3
\}$$ has asymptotic density $0$,  there exists an integer $h_0\ge
0$ with
$$ (p_1\cdots p_s)^\alpha h_0+a\in \bigcup_{i=1}^t\{ m_ih+a_i : h=0,1,\dots
\} .$$ Without loss of generality, we may assume that
$$ (p_1\cdots p_s)^\alpha h_0+a\in \{ m_1h+a_1 : h=0,1,\dots
\} .$$ So
$$(p_1\cdots p_s)^\alpha h_0+a\equiv a_1\pmod{m_1}.$$
It follows from (i) that \begin{equation}\label{f1}a\equiv
a_1\pmod{m_1}.\end{equation} By (i) and (ii), we have
\begin{equation}\label{f2}m_1\mid (p_1\cdots p_s/p_{\ell +3})^\alpha .\end{equation}
Since
$$( (p_1\cdots p_s/p_{\ell +3})^\alpha, p_{\ell +3}^\alpha)=1,$$
there exists a positive integer $a'$  such that
\begin{equation}\label{f3}a'\equiv a \pmod{(p_1\cdots p_s/p_{\ell +3})^\alpha},\quad
a'\equiv 2^c+1\pmod{p_{\ell +3}^\alpha}.\end{equation} Then
\begin{equation}\label{f4}p_{\ell +3}\nmid a'-2^c, \quad a'-2^c\equiv a-2^c\pmod{p_i^\alpha }, \quad 1\le i\le s, i\not= \ell +3.\end{equation}
By \eqref{f1}-\eqref{f3},
\begin{equation}\label{f5}a'\equiv a\equiv a_1 \pmod{m_1}.\end{equation}
By (iv) and \eqref{f4}, we have $p_i\nmid a'-2^c$ for all $1\le
i\le s$. For $p\in \mathcal{P}$ and $k\in \mathbb{N}$ with
$$p\equiv a'-2^c \pmod{(p_1\cdots p_s)^\alpha},\quad k\equiv
c\pmod{\varphi ((p_1\cdots p_s)^\alpha)},$$ by Euler's theorem we
have
\begin{equation}\label{f6}p+2^k\equiv a'-2^c +2^c\equiv a'\pmod{(p_1\cdots p_s)^\alpha}.\end{equation}
In view of (i), \eqref{f5} and \eqref{f6},
\begin{equation}\label{ff6}p+2^k\equiv a' \equiv a_1 \pmod{m_1}.\end{equation}
For any integer $n$, let $r(n)$ be the number of solutions of
$n=p+2^k$, $p\in \mathcal{P}$ and $k\in \mathbb{N}$. By
\eqref{ff6} and Dirichlet's theorem on arithmetic progressions, we
have
\begin{eqnarray}\label{ee4c}&&\sum_{\substack{n\le x\\ n\equiv a_1\hskip -3mm\pmod{m_1}}} r(n)\nonumber\\
&\ge & |\{ (p, k) : p+2^k\le x, p\in \mathcal{P}, p\equiv a'-2^c
\pmod{(p_1\cdots p_s)^\alpha},\nonumber\\
&& \hskip 3.5cm k\in \mathbb{N}, k\equiv
c\pmod{\varphi ((p_1\cdots p_s)^\alpha)} \} |\nonumber\\
&\ge & | \{ p\le \frac x2 : p\in \mathcal{P}, p\equiv a'-2^c
\pmod{(p_1\cdots p_s)^\alpha} \} |\nonumber\\
&& \cdot |\{ k\le \frac{\log (x/2)}{\log 2} : k\in \mathbb{N},
k\equiv
c\pmod{\varphi ((p_1\cdots p_s)^\alpha)} \} |\nonumber\\
&=& (1+o(1)) \frac x{2\varphi ((p_1\cdots p_s)^\alpha ) \log
(x/2)} \cdot \frac{\log (x/2)}{(\log 2)\varphi ((p_1\cdots
p_s)^\alpha) }\nonumber\\
&\gg & x,\end{eqnarray} where $\gg$ and the following $\ll$ are
Vinogradov's symbols. As an application of sieve methods, we have
(see \cite{ChenSun2004}
 or \cite{Romanoff1934})
\begin{equation}\label{ee4d}\sum_{n\le x} r(n)^2\ll x.\end{equation}
By the Cauchy-Schwarz inequality, we have
\begin{eqnarray*}&&\Big(\sum_{\substack{n\le x\\ n\equiv a_1\hskip -3mm\pmod{m_1}}}
r(n)\Big)^2\\
&=&\Big(\sum_{\substack{n\le x\\ n\equiv a_1\hskip
-3mm\pmod{m_1}\\ r(n)\ge 1}}
r(n)\Big)^2\\
&\le & \Big(\sum_{\substack{n\le x\\  n\equiv a_1\hskip -3mm\pmod{m_1}\\ r(n)\ge 1}} 1\Big) \Big(\sum_{\substack{n\le x\\
 n\equiv a_1\hskip -3mm\pmod{m_1}\\ r(n)\ge 1}}
r(n)^2\Big)\\
&\le & \Big(\sum_{\substack{n\le x\\  n\equiv a_1\hskip
-3mm\pmod{m_1}\\ r(n)\ge 1}} 1\Big) \Big(\sum_{n\le x}
r(n)^2\Big).
\end{eqnarray*}
It follows from \eqref{ee4c} and \eqref{ee4d} that
$$\sum_{\substack{n\le x\\  n\equiv a_1\hskip -3mm\pmod{m_1}\\ r(n)\ge 1}} 1\gg x.$$ This means that there is a
positive proportion integers in $$\{ m_1 h+a_1 : h=0,1,\dots \} $$
which can be represented as $n=p+2^k$, $p\in \mathcal{P}$ and
$k\in \mathbb{N}$, a contradiction with
$$\{ m_1 h+a_1 : h=0,1,\dots \} \subseteq \mathcal{U}\cup S.$$

The remaining thing is to find primes $p_1, \dots , p_{\ell +3},
\dots , p_s$ and positive integers $\alpha, a, c$ satisfying
(i)-(iv).

Let $\ell$ be an integer with $2^\ell > 4m_i$ for all $1\le i\le
t$. It is clear that $\ell >3$. For $1\le i\le \ell $, let $p_i$
be a prime factor of $2^{2^{i-1}}+1$ and let $c_i=2^{2^{i-1}-1}$.
For $\ell +1\le i\le \ell +3$, let $p_{i}$ be a prime factor of
$(2^{3\cdot 2^{2\ell -i}}+1)/(2^{2^{2\ell -i}}+1)$ and let
$c_{i}=2^{(i-\ell ) 2^{\ell } -1}$. Let $p_{j}$ $(\ell +4\le j\le
s)$ be all distinct prime factors of $m_1\cdots m_t$ which are not
$p_1,\dots , p_{\ell +3}$ and let $c_{j}=2^{3\cdot 2^{\ell }-1}+1$
$(\ell +4\le j\le s)$. Now we prove that $p_1, \dots , p_s$ are
distinct. Since $p_{j}$ $(\ell +4\le j\le s)$ are all distinct
prime factors of $m_1\cdots m_t$ which are not $p_1,\dots ,
p_{\ell +3}$, it is enough to prove that $p_1,\dots ,p_{\ell +3}$
are distinct. Recall that $r_p$ is  the multiplicative order of
$2$ modulo $p$, by $p_i\mid 2^{2^{i-1}}+1$ $(1\le i\le \ell )$ we
have $r_{p_i}=2^i$ $(1\le i\le \ell )$. For $\ell +1\le i\le
\ell+3$, by
$$p_i\mid \frac{2^{3\cdot 2^{2\ell -i}}+1}{2^{2^{2\ell -i}}+1},$$
we have
$$p_i\mid 2^{3\cdot 2^{2\ell -i}}+1.$$
It follows that
$$r_{p_i}\mid 3\cdot 2^{2\ell -i+1},\quad r_{p_i}\nmid 3\cdot 2^{2\ell
-i},\quad \ell +1\le i\le \ell +3.$$ So
$$r_{p_i}\in \{ 3\cdot 2^{2\ell -i+1}, 2^{2\ell -i+1} \} , \quad \ell +1\le i\le \ell +3.$$
Suppose that $r_{p_i}=2^{2\ell -i+1}$ for some $\ell +1\le i\le
\ell +3$. Then
$$2^{2^{2\ell -i}}\not\equiv
1\pmod{p_i}$$ and
$$2^{2^{2\ell -i+1}}\equiv 2^{r_{p_i}}\equiv 1\pmod{p_i}.$$
It follows that \begin{equation}\label{w1}2^{2^{2\ell -i}}\equiv
-1\pmod{p_i}.\end{equation} Thus,
$$\frac{2^{3\cdot 2^{2\ell -i}}+1}{2^{2^{2\ell -i}}+1}=2^{2\cdot 2^{2\ell
-i}}-2^{2^{2\ell -i}}+1\equiv (-1)^2-(-1)+1\equiv 3\pmod{p_i}.$$
Noting that
$$p_i\mid \frac{2^{3\cdot 2^{2\ell -i}}+1}{2^{2^{2\ell -i}}+1},$$
we have $p_i=3$. By $\ell >3$, we have $2\ell -i\ge \ell -3\ge 1$
and
$$2^{2^{2\ell -i}}\equiv (-1)^{2^{2\ell -i}}\equiv 1\pmod 3,$$
a contradiction with \eqref{w1}. Hence
$$r_{p_i}=3\cdot 2^{2\ell -i+1},\quad \ell +1\le i\le \ell+3.$$
Since $r_1, \dots , r_{\ell+3}$ are distinct, it follows that
$p_1,\dots ,p_{\ell +3}$ are distinct.

 We choose a positive integer
$\alpha$ such that $p_i^\alpha \nmid m_1\cdots m_t$ for all $1\le
i\le s$. Since all prime factors of $m_1\cdots m_t$ are in $\{
p_1, \dots , p_s\} $, it follows that $m_1\cdots m_t\mid
(p_1\cdots p_s)^\alpha$. That is, (i) holds. Noting that $r_{\ell
+3}$ is the multiplicative order of $2$ modulo $p_{\ell +3}$, we
have $r_{\ell +3} \mid p_{\ell +3}-1$.  In view of  $2^\ell >
4m_i$, we have
$$p_{\ell +3}\ge r_{\ell+3}+1=3\cdot 2^{2\ell -\ell -3+1}+1=3\cdot
2^{\ell -2}+1>m_{i},\quad 1\le i\le t.$$ Hence, (ii) holds.

By the Chinese remainder theorem, there exists a positive integer
$a$ such that \begin{equation}\label{q1}a\equiv
c_i\pmod{p_i^\alpha}, \quad i=1,2,\dots , s.\end{equation} Now we
prove (iii). Let $k$ be a positive integer. We will prove that
$a-2^k$ can be divided by at least one of $p_1,\dots , p_{\ell
+3}$.

If $2\mid k$, then by $p_1=3$ and $c_1=1$, we have
$$a-2^k\equiv c_1-2^k\equiv 1-1\equiv 0\pmod{p_1}.$$
It follows that $p_1\mid a-2^k$. Now we assume that $2\nmid k$.
That is, $k\equiv 1\pmod 2$. Let $j$ be the largest integer such
that
$$k\equiv 1+2+2^2+\cdots +2^{j-1}\pmod{2^{j}}.$$ Then $j\ge 1$.
Write
$$k=1+2+2^2+\cdots +2^{j-1}+2^ju=2^j-1+2^ju.$$
By the definition of $j$, $u$ is even. Let $u=2v$. Then
$$k=2^{j+1}v+2^j-1.$$
If $j\le \ell -1$, then by
\begin{eqnarray*}2^k&=&2^{2^{j+1}v+2^j-1}=(2^{2^{j+1}}-1+1)^v2^{2^j-1}\\
&=&((2^{2^{j}}-1)(2^{2^{j}}+1)+1)^vc_{j+1}\equiv
c_{j+1}\pmod{p_{j+1}},\end{eqnarray*} we have
$$a-2^k\equiv c_{j+1}-c_{j+1}\equiv 0\pmod{p_{j+1}}.$$
It follows that $p_{j+1}\mid a-2^k$. If $j\ge \ell$, then we can
write
\begin{eqnarray*}k&=&1+2+2^2+\cdots +2^{j-1}+2^ju\\
&=&1+2+2^2+\cdots +2^{\ell-1}+2^\ell u'\\
&=&2^\ell -1+2^\ell u'.\end{eqnarray*} Let $u'=3d+r$, $r\in \{
0,1,2\}$. Then
$$k=3\cdot 2^\ell d+ (r+1)2^\ell -1.$$
Since
$$p_{\ell +r+1}\mid 2^{3\cdot 2^{\ell -r-1}}+1,$$
it follows that
$$2^{3\cdot 2^\ell}\equiv 1\pmod{p_{\ell +r+1}}.$$
Thus,
$$2^k=2^{3\cdot 2^\ell d+ (r+1)2^\ell -1}\equiv 2^{(r+1)2^\ell -1}\pmod{p_{\ell +r+1}}.$$
So
$$a-2^k\equiv c_{\ell +r+1}-2^{(r+1)2^\ell -1}\equiv 0\pmod{p_{\ell +r+1}}.$$
It follows that $p_{\ell +r+1}\mid a-2^k$. In all cases, $a-2^k$
can be divided by at least one of $p_1,\dots , p_{\ell +3}$. That
is, (iii) holds.

Finally, let $c=3\cdot 2^{\ell }-1$. We will prove that (iv)
holds. That is, for all $1\le i\le s$ with $i\not= \ell +3$,
$p_i\nmid a-2^{c}$.

For $1\le i\le \ell $, by $p_i\mid 2^{2^{i-1}}+1$ we have $p_i>2$
and \begin{eqnarray*}2\left( a-2^{c}\right)&\equiv & 2a-2^{3\cdot
2^{\ell }}\equiv 2c_i-2^{3\cdot 2^{\ell }} \\
&\equiv & 2^{2^{i-1}}-2^{3\cdot 2^{\ell }}\equiv -1-(-1)^{3\cdot
2^{\ell -i+1}}\equiv -2\pmod{p_i}.\end{eqnarray*} It follows that
$$p_i\nmid a-2^{c}.$$

For $i=\ell +1, \ell +2$, by $p_{i}\mid 2^{3\cdot 2^{2\ell-i}}+1$
and $\ell>3$, we have $p_{i}>2$ and
\begin{equation}\label{ee1}2\left( a-2^{c}\right)\equiv 2c_{i}-2^{3\cdot 2^{\ell }}
\equiv 2^{(i-\ell )2^{\ell }}-2^{3\cdot 2^{\ell
}}\pmod{p_{i}}.\end{equation} Noting that
$$2^{3\cdot 2^{\ell
}}\equiv (2^{3\cdot 2^{2\ell-i}})^{2^{i-\ell}}\equiv
(-1)^{2^{i-\ell}}\equiv 1\pmod{p_{i}},$$ by \eqref{ee1} we have
\begin{equation}\label{ee1q}2\left( a-2^{c}\right)\equiv 2^{(i-\ell )2^{\ell
}}-1\pmod{p_{i}}.\end{equation} Since $r_{p_i}=3\cdot
2^{2\ell-i+1}$ cannot divide $(i-\ell )2^\ell $, it follows that
\begin{equation}\label{q2}2^{(i-\ell )2^{\ell
}}-1\not\equiv 0\pmod{p_{i}}.\end{equation} In view of $p_{i}>2$,
\eqref{ee1q} and \eqref{q2}, we know that $p_i\nmid a-2^{c}$.

Since $2\mid m_i$ $(1\le i\le t)$ and $p_1, \dots , p_{\ell +3}$
are odd,  one of $p_i$ $(\ell +4\le i\le s)$ is $2$. By
$c_i=2^{3\cdot 2^{\ell }-1}+1$ $(\ell +4\le i\le s)$ and
\eqref{q1}, we know that $a$ is  odd. Thus, $2\nmid a-2^{c}$. For
$\ell +4\le i\le s$ with $p_i>2$, we have
$$2\left( a-2^{c}\right)\equiv 2c_i-2^{c+1}\equiv 2^{3\cdot 2^{\ell }}+2-2^{3\cdot 2^{\ell
}} \equiv 2\pmod{p_{i}}.$$ It follows that $p_{i}\nmid a-2^{c}$.

Up to now, we have proved that for all $1\le i\le s$ with $i\not=
\ell+3$, $p_{i}\nmid a-2^{c}$. That is, (iv) holds.

This completes the proof of Theorem \ref{thm1}.

\section{A negative answer to Conjecture A}\label{ConjectureAsec}

It is clear that Theorem \ref{thm1} implies that Conjecture A is
false. In this section, we give a different proof for this.

\begin{theorem}\label{thm1a} Conjecture A is false.
\end{theorem}

Suppose that Conjecture A is true. Then $\mathcal{U}$ can be
written as
$$\mathcal{U}=\{ m_0h+a_0 : h=0,1,\dots \} \cup W,$$
where $m_0, a_0$ are two positive integers and $W$ has asymptotic
density zero. It is clear that $m_0$ is even. Otherwise,
$\mathcal{U}$ contains infinitely many even integers.

We begin with the following simple lemmas.

\begin{lemma}\label{basiclemma1} Let $a, m$ be two positive
integers such that \begin{equation}\label{eq1}\{ mh+a :
h=0,1,\dots \} \subseteq \mathcal{U} .\end{equation} Then $m_0\mid
m$ and $m_0\mid a-a_0$. \end{lemma}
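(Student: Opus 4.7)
The plan is to proceed by a density comparison argument. Set $A = \{mk + a : k = 0, 1, \dots\}$ and $B = \{m_0 k + a_0 : k = 0, 1, \dots\}$. The hypothesis $A \subseteq \mathcal{U}$ combined with the assumed decomposition $\mathcal{U} = B \cup W$ yields $A \setminus B \subseteq W$, so $A \setminus B$ has asymptotic density zero.

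The next step is to compute the density of $A \setminus B$ explicitly in terms of $m$ and $m_0$. The set $A$ has density $1/m$. The intersection $A \cap B$ is either empty or an arithmetic progression with common difference $\operatorname{lcm}(m, m_0)$, so its density is either $0$ or $1/\operatorname{lcm}(m, m_0)$. I would first rule out the empty case: if $A \cap B = \emptyset$, then $A \subseteq W$, which contradicts that $A$ has positive density $1/m$ while $W$ has density zero. Therefore $A \cap B$ is nonempty, its density equals $1/\operatorname{lcm}(m, m_0)$, and
$$0 = d(A \setminus B) = \frac{1}{m} - \frac{1}{\operatorname{lcm}(m, m_0)}.$$
This forces $\operatorname{lcm}(m, m_0) = m$, i.e.\ $m_0 \mid m$.

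Finally, with $m_0 \mid m$ in hand, I would extract the congruence condition from the nonemptiness of $A \cap B$. Pick any element $mk + a = m_0 j + a_0$ in the intersection; reducing modulo $m_0$ and using $m \equiv 0 \pmod{m_0}$ gives $a \equiv a_0 \pmod{m_0}$, that is, $m_0 \mid a - a_0$.

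The argument is essentially a one-line density calculation, so I do not anticipate any real obstacle; the only subtle point is the clean case split on whether $A \cap B$ is empty, which is resolved immediately by comparing $1/m > 0$ against $d(W) = 0$.
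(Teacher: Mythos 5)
Your argument is correct, but it reaches $m_0\mid m$ by a different route than the paper. You compute densities: since $A\setminus B\subseteq W$ has density zero and $A\cap B$ is (by CRT, once nonempty) an arithmetic progression of common difference $\operatorname{lcm}(m,m_0)$, additivity of density over the disjoint split $A=(A\cap B)\cup(A\setminus B)$ forces $1/m=1/\operatorname{lcm}(m,m_0)$, hence $m_0\mid m$; the nonemptiness is settled by comparing $d(A)=1/m>0$ with $d(W)=0$. The paper instead avoids any discussion of the structure of $A\cap B$: it notes that the set of $k$ for which $\{mk+a,\,m(k+1)+a\}$ meets $W$ has density zero, so some two \emph{consecutive} terms of the progression $A$ lie in $B=\{m_0k+a_0\}$, and their difference $m$ is then immediately a multiple of $m_0$. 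Your approach needs the CRT description of the intersection of two residue classes (together with the half-infinite truncation) and the additivity of asymptotic density for a disjoint union in which the relevant densities exist; these are routine but should be stated if written out in full. The paper's two-consecutive-terms trick buys a shorter, more self-contained argument. The final step is the same in both: reduce one common element $mk_0+a\equiv a_0\pmod{m_0}$ and use $m_0\mid m$ to get $m_0\mid a-a_0$.
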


\begin{proof} Since the
asymptotic density of $W$ is zero, the set of positive integers
$h$ with
$$\{ mh+a, m(h+1)+a \} \cap W \not=\emptyset $$
has asymptotic density zero.  It follows from \eqref{eq1} that
there is an integer $h_0$ such that
$$\{ mh_0+a, m(h_0+1)+a \} \subseteq \{ m_0h+a_0 : h=0,1,\dots \} $$
So $m=m(h_0+1)+a-(mh_0+a)$ is divisible by $m_0$. Noting that
$mh_0+a\equiv a_0\pmod{m_0}$ and $m_0\mid m$, we have $m_0\mid
a-a_0$.
\end{proof}

\begin{lemma}\label{firstap} We have
\begin{equation}\label{eq4a}\{ 11184810s + 992077 : s=0,1, \dots \} \subseteq \mathcal{U} .\end{equation}
\end{lemma}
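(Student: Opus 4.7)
The plan is to apply a covering system of congruences, in the spirit of Erd\H os' original construction. First I factor $11184810 = 2\cdot 3\cdot 5\cdot 7\cdot 13\cdot 17\cdot 241$; the multiplicative orders of $2$ modulo the odd prime divisors $3, 5, 7, 13, 17, 241$ are $2, 4, 3, 12, 8, 24$ respectively, all dividing $24$.

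For each odd prime divisor $p$ of $11184810$ I will compute the unique residue $a_p$ modulo $\operatorname{ord}_p(2)$ satisfying $2^{a_p}\equiv 992077\pmod p$. Then a short finite verification will show that the six resulting congruence classes $k\equiv a_p\pmod{\operatorname{ord}_p(2)}$ together cover every residue modulo $24$. Consequently, for every $k\ge 1$ and every $n\equiv 992077\pmod{11184810}$, at least one $p\in\{3,5,7,13,17,241\}$ divides $n-2^k$.

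Now suppose for contradiction that some such $n$ equals $q+2^k$ with $q$ prime and $k\ge 1$. The covering forces $p\mid q$ for some $p$ in the list, and primality of $q$ gives $q=p$. It remains to rule out $n=p+2^k$ for each of the six choices of $p$; I do this by reducing modulo a second odd prime divisor $p'$ of $11184810$. For $p\in\{7,13,241\}$ with $p'=3$, and for $p=17$ with $p'=5$, one verifies $p\equiv n\pmod{p'}$, so $p'\mid n-p=2^k$, a contradiction since $p'\ne 2$. For $p=3$, reduction modulo $7$ gives $2^k\equiv 6\pmod 7$, which has no solution because $2^k \bmod 7\in\{1,2,4\}$. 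For $p=5$, reduction modulo $13$ forces $k\equiv 4\pmod{12}$, incompatible with the class $k\equiv a_5\pmod 4$ of $p=5$.

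The main obstacle is essentially bookkeeping: one must verify the covering property and then execute the six short congruence checks, all done uniformly in $s$. No tools beyond the Chinese remainder theorem and elementary computation with orders of $2$ are needed.
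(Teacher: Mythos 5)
Your proposal is correct and follows essentially the same route as the paper: the Erd\H os covering system $k\equiv 0\ (\mathrm{mod}\ 2),\ 1\ (\mathrm{mod}\ 4),\ 3\ (\mathrm{mod}\ 8),\ 1\ (\mathrm{mod}\ 3),\ 3\ (\mathrm{mod}\ 12),\ 23\ (\mathrm{mod}\ 24)$ attached to the primes $3,5,17,7,13,241$, the CRT identification of the class $992077\ (\mathrm{mod}\ 11184810)$, and then elimination of the six candidate primes by secondary congruences. The only difference is cosmetic bookkeeping: you exclude $p=17$ via the modulus $5$ and $p=5$ via the incompatible exponent classes modulo $12$ and $4$, whereas the paper excludes $p=3,17$ modulo $7$ and $p=5$ modulo $17$; all your individual congruence checks are accurate.
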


\begin{proof} We follow the proof of Erd\H os \cite{Erdos1950}.
It is easy to see that
\begin{eqnarray*}\mathbb{Z}&=&0\hskip -3mm\pmod 2\cup 1\hskip -3mm\pmod 4\cup 3\hskip -3mm\pmod 8 \cup 1\hskip -3mm\pmod{3}\\
&& \cup\, 3\hskip -3mm\pmod{12}\cup 23\hskip
-3mm\pmod{24}.\end{eqnarray*} Thus, for any positive integer $k$,
\begin{eqnarray*}2^k&\in & 2^0\hskip -3mm\pmod{(2^2-1)}\cup 2^1\hskip -3mm\pmod{(2^4-1)}\cup 2^3\hskip -3mm\pmod {(2^8-1)}\\
&&\cup\,  2^1\hskip -3mm\pmod{(2^3-1)} \cup 2^3\hskip
-3mm\pmod{(2^{12}-1)}\cup 2^{23}\hskip
-3mm\pmod{(2^{24}-1)}.\end{eqnarray*} Noting that
$$3\mid 2^2-1,\ 5\mid 2^4-1,\ 17\mid 2^8-1,\ 7\mid 2^3-1,\ 13\mid
2^{12}-1,\ 241\mid 2^{24}-1,$$  for any positive integer $k$, we
have
\begin{eqnarray}\label{eq4}2^k&\in & 2^0\hskip -3mm\pmod{3}\cup 2^1\hskip -3mm\pmod{5}
\cup 2^3\hskip -3mm\pmod {17}\cup 2^1\hskip -3mm\pmod{7}\nonumber\\
&& \cup\,  2^3\hskip -3mm\pmod{13}\cup 2^{23}\hskip
-3mm\pmod{241}.\end{eqnarray}  A calculation shows that
\begin{eqnarray}\label{eq5}&&1\hskip -3mm\pmod 2\cap 2^0\hskip -3mm\pmod{3}\cap 2^1\hskip -3mm\pmod{5}\cap 2^3\hskip -3mm\pmod {17}\cap
2^1\hskip -3mm\pmod{7}\nonumber\\
&& \cap\,  2^3\hskip -3mm\pmod{13}\cap 2^{23}\hskip -3mm\pmod{241}\nonumber\\
&=& \{ 11184810s + 992077 : s\in \mathbb{Z} \} .\end{eqnarray} Let
$$n\in \{ 11184810s + 992077 : s=0,1, \dots \} .$$
Suppose that  $n\notin \mathcal{U}$. Then $n$ can be represented
as $p+2^k$, $p\in \mathcal{P}$, $k\in \mathbb{N}$. By \eqref{eq4}
and \eqref{eq5}, we have
\begin{eqnarray*}p=n-2^k&\in & 0\hskip -3mm\pmod{3}\cup 0\hskip -3mm\pmod{5}\cup 0\hskip -3mm\pmod {17}\\
&& \cup\,  0\hskip -3mm\pmod{7}\cup 0\hskip -3mm\pmod{13}\cup
0\hskip -3mm\pmod{241}.\end{eqnarray*} It follows that
$$p\in \{ 3,5,17,7,13,241\} .$$
By \eqref{eq5}, we have
$$p=n-2^k\equiv 1-2^k\not\equiv 1 \hskip -3mm\pmod{3},$$
$$p=n-2^k\equiv 2-2^k\equiv 0, 1, 5  \hskip -3mm\pmod{7},$$
$$p=n-2^k\equiv 8-2^k\equiv 0,4,6,7,9,10,12,16 \hskip -3mm\pmod{17},$$
The first congruence gives $p\not= 7, 13, 241$. The second
congruence gives $p\not= 3, 17$. The third congruence gives
$p\not= 5$. Thus, we have derived a contradiction. Hence, $n\in
\mathcal{U}$. Therefore, \eqref{eq4a} holds.
\end{proof}

\begin{lemma}\label{secondap} We have
\begin{equation}\label{eq4b}\{ 11184810s + 3292241 : s=0,1, \dots \} \subseteq \mathcal{U} .\end{equation}
\end{lemma}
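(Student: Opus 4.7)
The plan is to follow the pattern of Lemma~\ref{firstap} with a different covering of $\mathbb{Z}$, engineered so that the resulting CRT intersection is $\{11184810s+3292241:s\in\mathbb{Z}\}$ rather than \eqref{eq5}. The covering I would use is
\begin{eqnarray*}
\mathbb{Z}&=&1\hskip -3mm\pmod 2\cup 0\hskip -3mm\pmod 4\cup 2\hskip -3mm\pmod 8\cup 0\hskip -3mm\pmod 3\\
&&\cup\, 2\hskip -3mm\pmod{12}\cup 22\hskip -3mm\pmod{24},
\end{eqnarray*}
which is verified by enumerating the twelve even residues modulo $24$ (the odd residues are all covered by $1\pmod 2$). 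Attaching the same six primes $3,5,17,7,13,241$ as in Lemma~\ref{firstap} then gives, for every positive integer $k$,
\begin{eqnarray*}
2^k&\in&2\hskip -3mm\pmod{3}\cup 1\hskip -3mm\pmod{5}\cup 4\hskip -3mm\pmod{17}\cup 1\hskip -3mm\pmod{7}\\
&&\cup\,4\hskip -3mm\pmod{13}\cup 181\hskip -3mm\pmod{241},
\end{eqnarray*}
where $2^{22}\equiv 4^{-1}\equiv 181\pmod{241}$ follows from $4\cdot 181=3\cdot 241+1$. A CRT computation in the style of \eqref{eq5} will then confirm that these classes, intersected with the odd integers, cut out exactly $\{11184810s+3292241:s\in\mathbb{Z}\}$.

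Next, assuming some $n$ in this AP satisfies $n=p+2^k$ with $p\in\mathcal{P}$ and $k\in\mathbb{N}$, the covering forces $p\in\{3,5,7,13,17,241\}$. Three elimination congruences then dispose of most candidates. From $n\equiv 2\pmod 3$ one gets $p\equiv 2-2^k\not\equiv 2\pmod 3$, ruling out $p\in\{5,17\}$. From $n\equiv 1\pmod 7$ one gets $p\equiv 1-2^k\in\{0,4,6\}\pmod 7$, ruling out $p\in\{3,241\}$. From $n\equiv 4\pmod{17}$ one gets $p\equiv 4-2^k\in\{0,2,3,5,6,8,12,13\}\pmod{17}$, ruling out $p=7$.

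The principal new obstacle is eliminating $p=13$. Unlike in Lemma~\ref{firstap}, where $13\equiv 1\pmod 3$ was caught by the first congruence, none of the three congruences above rules it out (indeed $13\equiv 1\pmod 3$, $13\equiv 6\pmod 7$, and $13\equiv 13\pmod{17}$ all lie in the respective allowed sets, and neither mod~$13$ nor mod~$5$ alone suffices). I plan to resolve this by combining the mod-$3$ and mod-$5$ information: since $n\equiv 11\pmod{15}$ and the multiplicative order of $2$ modulo $15$ is $4$, one has $2^k\in\{1,2,4,8\}\pmod{15}$, so $p\equiv 11-2^k\in\{3,7,9,10\}\pmod{15}$, which forces $p\not\equiv 13\pmod{15}$. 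Equivalently, $p=13$ would require $2^k=n-13$ to be simultaneously $\equiv 1\pmod 3$ (so $k$ even) and $\equiv 3\equiv 2^3\pmod 5$ (so $k\equiv 3\pmod 4$, hence odd), a contradiction. This will complete the proof of \eqref{eq4b}.
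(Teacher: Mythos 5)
Your proposal is correct and follows essentially the same route as the paper: the same shifted covering $1\!\pmod 2,\,0\!\pmod 4,\,2\!\pmod 8,\,0\!\pmod 3,\,2\!\pmod{12},\,22\!\pmod{24}$ with the same primes $3,5,17,7,13,241$, the same three elimination congruences modulo $3$, $7$, $17$, and the same disposal of $p=13$ via $2^k\equiv 1\pmod 3$ forcing $k$ even while $2^k\equiv 3\pmod 5$ forces $k\equiv 3\pmod 4$. The only cosmetic differences are that you verify the covering by enumerating even residues modulo $24$ instead of shifting the covering of Lemma~\ref{firstap} by $1$, and you phrase the final step modulo $15$ before reducing it to the same mod-$3$/mod-$5$ parity clash.
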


\begin{proof} We follow the proof of Lemma \ref{firstap}. It is
easy to see that
\begin{eqnarray*}\mathbb{Z}&=&1\hskip -3mm\pmod 2\cup 0\hskip -3mm\pmod 4\cup 2\hskip -3mm\pmod 8\cup  0\hskip -3mm\pmod{3} \\
&& \cup\, 2\hskip -3mm\pmod{12}\cup 22\hskip
-3mm\pmod{24}.\end{eqnarray*} As in the proof of Lemma
\ref{firstap}, for any positive integer $k$,
\begin{eqnarray}\label{eq4w}2^k&\in & 2^1\hskip -3mm\pmod{3}\cup 2^0\hskip -3mm\pmod{5}\cup 2^2\hskip -3mm\pmod {17}\cup 2^0\hskip -3mm\pmod{7}\nonumber\\
&& \cup\,  2^2\hskip -3mm\pmod{13}\cup 2^{22}\hskip
-3mm\pmod{241}.\end{eqnarray}  A calculation shows that
\begin{eqnarray}\label{eq5w}&&1\hskip -3mm\pmod 2\cap 2^1\hskip -3mm\pmod{3}\cap 2^0\hskip -3mm\pmod{5}\cap 2^2\hskip -3mm\pmod {17}\cap
2^0\hskip -3mm\pmod{7}\nonumber\\
&&\cap\,  2^2\hskip -3mm\pmod{13}\cap 2^{22}\hskip -3mm\pmod{241}\nonumber\\
&=& \{ 11184810s + 3292241 : s\in \mathbb{Z} \} .\end{eqnarray}
Let
$$n\in \{ 11184810s + 3292241 : s=0,1, \dots \} .$$
Suppose that  $n\notin \mathcal{U}$. Then  $n$ can be represented
as $p+2^k$, $p\in \mathcal{P}$, $k\in \mathbb{N}$. By \eqref{eq4w}
and \eqref{eq5w}, we have
\begin{eqnarray*}p=n-2^k&\in & 0\hskip -3mm\pmod{3}\cup 0\hskip -3mm\pmod{5}\cup 0\hskip -3mm\pmod {17}\cup 0\hskip -3mm\pmod{7}\\
&&\cup\,  0\hskip -3mm\pmod{13}\cup 0\hskip
-3mm\pmod{241}.\end{eqnarray*} It follows that
$$p\in \{ 3,5,17,7,13,241\} .$$
By \eqref{eq5w}, we have
$$p=n-2^k\equiv 1-2^k\equiv 0, 4, 6  \hskip -3mm\pmod{7},$$
$$p=n-2^k\equiv 4-2^k\equiv 0,2,3,5,6,8,12, 13\hskip -3mm\pmod{17}.$$
The first congruence gives $p\not= 3, 5, 17, 241$. The second
congruence gives $p\not= 7$. The remaining case is $p=13$. Thus,
by \eqref{eq5w},
$$2^k=n-p\equiv 2-13\equiv 1\hskip -3mm\pmod{3},$$ $$2^k=n-p\equiv 1-13\equiv 3\hskip -3mm\pmod{5}.$$ It
follows that $k\equiv 0\pmod{2}$ and $k\equiv 3\pmod{4}$, a
contradiction. Hence,  $n\in \mathcal{U}$.  Therefore,
\eqref{eq4b} holds.
\end{proof}

\begin{proof}[Proof of Theorem \ref{thm1a}] By Lemmas
\ref{basiclemma1}$-$\ref{secondap}, we have
$$m_0\mid 11184810,\quad m_0\mid 992077-a_0, \quad m_0\mid
3292241-a_0.$$ It follows that
$$m_0\mid 11184810,\quad m_0\mid 3292241-992077.$$
Noting that $$\gcd ( 11184810, 3292241-992077)=2,$$ we have
$m_0\mid 2$. So $m_0=2$. Hence $$\mathcal{U}=\{ m_0k+a_0 :
k=0,1,\dots \} \cup W$$ contains all sufficiently large odd
integers. But,  $2^n+3\notin \mathcal{U}$ for all $n\ge 1$, a
contradiction.
\end{proof}

\begin{remark}It is clear that \eqref{eq1} can be replaced by \begin{equation*}\{ mh+a :
h=0,1,\dots \} \subseteq \mathcal{U}\cup T,\end{equation*} where
$T$ is a set of asymptotic density  zero. Thus, Lemmas
\ref{firstap} and \ref{secondap} in the following weak forms can
be applied to prove Theorem \ref{thm1a}: $$\{ 11184810s + 992077 :
s=0,1, \dots \} \subseteq \mathcal{U}\cup T,$$ $$\{ 11184810s +
3292241 : s=0,1, \dots \} \subseteq \mathcal{U} \cup T,$$ where
$$T=\{ 2^k +p : k\in \mathbb{N}, p\in \{ 3,5,17,7,13,241\} \} .$$
\end{remark}

\section{Proofs of Theorem \ref{thm1b}, Theorem \ref{thm1c} and their corollaries} \label{thm1bsec}

 A sequence $\{ m_1,
\dots , m_t\}$ of positive integers is called \emph{coverable} if
there exist $t$ integers $a_1, \dots , a_t$ such that
$$\bigcup_{i=1}^t a_i\pmod{m_i} =\mathbb{Z}.$$
A coverable sequence $\{ m_1, \dots , m_t\}$  is called
\emph{minimal} if no proper subsequence of $\{ m_1, \dots , m_t
\}$ is coverable. For example, $\{ 3, 3, 3\}$ is a  minimal
coverable sequence. It is clear that if $\{ m_1, \dots , m_t \} $
is coverable sequence, then
\begin{equation}\label{a3}\sum_{i=1}^t\frac 1{m_i}\ge
1.\end{equation}  A set $\{ p_1, \dots , p_t\} $ of distinct odd
primes is called a \emph{well constructed  prime set} if $\{
r_{p_1}, \dots , r_{p_t}\} $ is coverable. A well constructed
prime set $\{ p_1, \dots , p_t\} $ is called \emph{minimal} if no
proper subset of $\{ p_1, \dots , p_t\} $ is a well constructed
prime set.

\begin{lemma}\label{lemcover1} Let  $\{ m_1, \dots , m_t\} $  be a
sequence and let $a_1,\dots , a_t$ be integers such that
\begin{equation}\label{a2}\bigcup_{i=1}^t a_i\pmod{m_i} =\mathbb{Z}.\end{equation}
If $p$ is a prime and $\{ a_i : p\mid m_i\} $ does not contain a
complete residue system modulo $p$, then $\{ m_i : 1\le i\le t,
p\nmid m_i \} $ is  a  coverable sequence.

In particular, if $\{ m_1, \dots , m_t\} $ is  a  coverable
sequence and $|\{ i : p\mid m_i \} |<p$, then $\{ m_i : p\nmid m_i
\} $ is also a  coverable sequence.
\end{lemma}

\begin{proof}
Without loss of generality, we may assume that $p\mid m_i$ $(1\le
i\le s)$ and $p\nmid m_j$ $(s<j\le t)$. Then for $s<j\le t$, there
exists an integer $u_j$ such that $pu_j\equiv 1\pmod{m_j}$. If $\{
a_i : p\mid m_i\} $ does not contain a complete residue system
modulo $p$ (including the case $\{ a_i : p\mid m_i\} =\emptyset$),
then there exists an integer $a$ such that $a\not\equiv
a_i\pmod{p}$ $(1\le i\le s)$. Thus,
$$a\pmod p \cap a_i\pmod{m_i} =\emptyset, \quad 1\le i\le s.$$
It follows from \eqref{a2} that
$$a\pmod p \subseteq \bigcup_{j=s+1}^t a_j\pmod{m_j}.$$
That is, for any integer $n$, there exists $s+1\le j\le t$ such
that
$$a+pn\equiv  a_j\pmod{m_j}.$$
So $$n\equiv (a_j-a)u_j\pmod{m_j}.$$ Hence
$$\bigcup_{j=s+1}^t (a_j-a)u_j\pmod{m_j} =\mathbb{Z}.$$
It follows that $\{ m_{s+1},\dots , m_t\}$ is a coverable
sequence.

If $|\{ i : p\mid m_i \} |<p$, then $\{ a_i : p\mid m_i\} $ does
not contain a complete residue system modulo $p$ and so $\{ m_i :
p\nmid m_i \} $ is a coverable sequence.
\end{proof}

\begin{lemma}\label{lemprimitive} (i) There is no prime $p$ such
that $r_p=1$;

(ii) There is no prime $p$ such that $r_p=6$;

(iii) Let $p$ be a prime. Then $r_p=2\Leftrightarrow p=3$,
$r_p=3\Leftrightarrow p=7$, $r_p=2^2\Leftrightarrow p=5$,
$r_p=5\Leftrightarrow p=31$, $r_p=2^3\Leftrightarrow p=17$,
$r_p=9\Leftrightarrow p=73$, $r_p=10\Leftrightarrow p=11$,
$r_p=12\Leftrightarrow p=13$, $r_p=15\Leftrightarrow p=151$,
$r_p=2^4\Leftrightarrow p=257$, $r_p=2^5\Leftrightarrow
p=65537$.\end{lemma}

Lemma \ref{lemprimitive} can be verified directly. We omit its
proof.

\begin{lemma}\label{lemcover2} If $\{ p_1, \dots , p_t\} $ is
 a well constructed prime set, then $t\ge 6$ and $p_1\cdots p_t\ge 5592405.$

 Furthermore, $t=6$ if and only if $p_1\cdots p_t=5592405.$
 \end{lemma}

\begin{proof} Without loss of generality, we may assume that $\{ p_1, \dots , p_t\} $ is
 a minimal well constructed prime set. By the definition, $p_1, \dots ,
 p_t$ are distinct odd primes and $\{ r_{p_1}, \dots ,
r_{p_t}\}$ is  a minimal coverable sequence. By \eqref{a3},
\begin{equation}\label{a4}\sum_{i=1}^t\frac 1{r_{p_i}}\ge
1.\end{equation} If there exists a prime $p\ge 5$ such that $p\mid
r_{p_1} \cdots r_{p_t}$, then by Lemma \ref{lemcover1}, there are
at least $p$ of $r_{p_1}, \dots , r_{p_t}$ with  $p\mid r_{p_i}$.
It follows that $t\ge p$.  Since $r_{p_i}$ is the multiplicative
order of $2$ modulo $p_i$, we have $r_{p_i}\mid p_i-1$. So $p\mid
p_i-1$ for at least $p$ of $i$. Hence
$$p_1\cdots p_t\ge (2p+1) (4p+1)(6p+1)(8p+1)(10p+1)> 5592405.$$
If $p\ge 7$, then $t\ge p\ge 7$. Now we assume that $p=5$ and
$t\le 6$. If $5\mid r_{p_i}$ for all $1\le i\le t$, then by Lemma
\ref{lemprimitive},
$$\sum_{i=1}^t\frac 1{r_{p_i}}\le \frac 15 +\frac{5}{10}<1,$$
a contradiction with \eqref{a4}. So there exists $1\le j\le t$
with $5\nmid r_{p_j}$. Without loss of generality, we may assume
that $5\nmid r_{p_1}$. Noting that $t\le 6$ and there are at least
$5$ of $p_i $ $(1\le i\le t)$ with $5\mid r_{p_i}$,  we have $t=6$
and $5\mid r_{p_i}$ $(2\le i\le 6)$. By Lemma \ref{lemprimitive},
$$\sum_{i=1}^t\frac 1{r_{p_i}}\le \frac 12+\frac 15+\frac 1{10}+\frac 1{15} +\frac{2}{20}<1,$$
a contradiction with \eqref{a4}. Up to now, we have proved that if
there exists a prime $p\ge 5$ such that $p\mid r_{p_1} \cdots
r_{p_t}$, then  $t\ge 7$ and $p_1\cdots p_t>5592405.$

Now we assume that there is no prime $p\ge 5$ such that $p\mid
r_{p_1} \cdots  r_{p_t}$. So each $r_{p_i}$ has only prime
divisors $2$ and $3$. The initial primes $p$ for which $r_{p}$ has
only prime divisors $2$ and $3$ are $3, 5,
7,13,17,19,37,73,97,109,163,193,241$. If $t\ge 7$, then
$$p_1\cdots p_t\ge 3\cdot 5\cdot 7\cdot 13\cdot 17\cdot 19\cdot 37>
5592405.$$ In the following, we assume that $t\le 6$. The initial
positive integers of the form $2^a3^b$ are
$$1,2,3,4,6,8,9,12,16,18,24,27,32,36,\dots .$$  If $r_{p_i}\not= 2$,
then by Lemma \ref{lemprimitive},
$$\sum_{i=1}^t\frac 1{r_{p_i}}\le \frac 13+\frac 14+\frac 18+\frac 19+\frac 1{12}+\frac
1{16}<1,$$ a contradiction with \eqref{a4}. So there exists $1\le
i\le t$ such that $r_{p_i}=2$. Without loss of generality, we may
assume that $r_{p_1}=2$.

We divide into the following cases:

{\bf Case 1:} All $r_{p_i}$ are powers of $2$.  By Lemma
\ref{lemprimitive},
$$\sum_{i=1}^t\frac 1{r_{p_i}}\le \frac 12+\frac 1{2^2}+\frac 1{2^3}
+\frac 1{2^4}+\frac 1{2^5}+\frac 1{2^6}<1,$$ a contradiction with \eqref{a4}.

{\bf Case 2:} At least one of $r_{p_i}$ is divisible by $3$. Since
$\{ r_{p_1}, \dots , r_{p_t}\}$ is  a coverable sequence, there
exist $t$ integers $a_1, \dots , a_t$ such that
\begin{equation}\label{a5}\bigcup_{i=1}^t a_i\pmod{r_{p_i}} =\mathbb{Z}.\end{equation} Let
$$A=\{ i : 3\nmid r_{p_i} \} , \quad B_j=\{ i : 3\mid r_{p_i}, a_i\equiv
j\pmod 3\} ,\quad 0\le j\le 2.$$ Then $1\in A$ by $r_{p_1}=2$,
$|A|<t$ and $|A|+|B_0|+|B_1|+|B_2|=t\le 6$. Since $\{ r_{p_1},
\dots , r_{p_t}\}$ is a minimal coverable sequence and $|A|<t$, it
follows that $\{ r_{p_i} : i\in A\}$ is not a coverable sequence.
By Lemma \ref{lemcover1}, $\{ a_i : 3\mid r_{p_i} \}$ contains a
residue system modulo $3$. So $B_j\not=\emptyset $ $(j=0,1,2)$.
That is, $|B_j|\ge 1$ $(j=0,1,2)$. Noting that
$|A|+|B_0|+|B_1|+|B_2|\le 6$, we have $1\le |A|\le 3$. We divide
into the following subcases
according to $|A|=1,2,3$.\\

{\bf Subcase 2.1:} $|A|=1$. Since $|A|+|B_0|+|B_1|+|B_2|=t\le 6$
and $|B_j|\ge 1$ $(j=0,1,2)$, we know that at least one of $|B_j|$
$(j=0,1,2)$ is $1$. Suppose that $|B_{j_0}|=1$. Without loss of
generality, we may assume that $B_{j_0}=\{ 2 \}$. For any integer
$n$, by \eqref{a5} we have
$$3n+j_0\in \bigcup_{i=1}^t a_i\pmod{r_{p_i}}.$$
Noting that $\{ 3, \dots , t\} =(B_0\cup B_1\cup B_2)\setminus
B_{j_0}$, we have
$$3n+j_0\notin \bigcup_{i=3}^t a_i\pmod{r_{p_i}}.$$
It follows that
$$3n+j_0\in a_1\pmod{r_{p_1}}\cup a_2\pmod{r_{p_2}}.$$
In view of $r_{p_1}=2$, $a_2\equiv j_0\pmod 3$ and $3\mid
r_{p_2}$, we have
$$n\in (a_1-j_0)\pmod 2 \cup 3^{-1}(a_2-j_0)\pmod{(r_{p_2}/3)}.$$
Hence, $\{ 2, r_{p_2}/3 \}$ is a coverable sequence. It follows
from \eqref{a3} and Lemma \ref{lemprimitive} that $r_{p_2}/3=1$.
That is, $r_{p_2}=3$ and $p_2=7$. For $3\le i\le t$, by Lemma
\ref{lemprimitive} and $p_i\not= p_2$, we have $r_{p_i}/3\ge 3$.
Noting that $p_1,\dots , p_t$ are distinct, $j_0$ is the unique
$j\in \{ 0,1,2\}$ such that $|B_{j}|=1$. Thus, $|B_j|\not=1$ for
$j\not= j_0$.  By $|A|+|B_0|+|B_1|+|B_2|=t\le 6$ and $|B_j|\ge 1$
$(j=0,1,2)$, we have $t=6$ and $|B_j|=2$ for $j\not= j_0$. Without
loss of generality, we may assume that $B_{j_1}=\{ 3,4 \}$ and
$B_{j_2}=\{ 5,6 \}$. Similar to the above arguments, both $\{ 2,
r_{p_3}/3, r_{p_4}/3 \}$ and $\{ 2, r_{p_5}/3, r_{p_6}/3 \}$ are
coverable sequences. Since $r_{p_i}/3\ge 3$ $(3\le i\le 6)$, it
follows from \eqref{a3} that both $\{ 2, r_{p_3}/3, r_{p_4}/3 \}$
and $\{ 2, r_{p_5}/3, r_{p_6}/3 \}$ are minimal coverable
sequences. By Lemma \ref{lemcover1}, $3\nmid (r_{p_i}/3)$, $(3\le
i\le 6)$. In view of \eqref{a3}, the only possibility is
$r_{p_i}/3=4$ $(3\le i\le 6)$. That is, $r_{p_i}=12$ $(3\le i\le
6)$.  By Lemma \ref{lemprimitive}, $p_i=13$ $(3\le i\le 6)$. This
contradicts that $p_1, \dots , p_t$ are distinct.\\

{\bf Subcase 2.2:} $|A|=2$. Since $|A|+|B_0|+|B_1|+|B_2|=t\le 6$
and $|B_j|\ge 1$ $(j=0,1,2)$, we know that at least two of $|B_j|$
$(j=0,1,2)$ are $1$. Without loss of generality, we may assume
that
$$ A=\{ 1 ,2 \} ,\quad B_{j_1}=\{ 3 \},\quad B_{j_2}=\{ 4 \} .$$
By $r_{p_1}=2$ and Lemma \ref{lemprimitive}, we have $r_{p_2}\ge
4$. Similar to the arguments in Subcase 2.1, both $\{ r_{p_1},
r_{p_2}, r_{p_3}/3 \} $ and $\{ r_{p_1}, r_{p_2}, r_{p_4}/3 \} $
are  coverable sequences. In view of Lemma \ref{lemprimitive}, one
of $r_{p_3}$ and $r_{p_4}$ is not equal to $3$. Without loss of
generality, we may assume that $r_{p_3}\not=3$. By Lemma
\ref{lemprimitive},  $r_{p_3}/3\ge 3$. From \eqref{a3}, we obtain
that $\{ r_{p_1}, r_{p_2}, r_{p_3}/3 \} $ is a minimal coverable
sequence. In view of Lemma \ref{lemcover1}, $3\nmid r_{p_3}/3$.
Thus, $r_{p_1}, r_{p_2}, r_{p_3}/3$ are all powers of $2$. Again,
by \eqref{a3}, $\{ r_{p_1}, r_{p_2}, r_{p_3}/3 \} =\{ 2, 4, 4\}$.
It follows from Lemma \ref{lemprimitive} that $\{ p_1, p_2, p_3\}
=\{ 3, 5, 13\} $. If $r_{p_4}\not= 3$, then the similar arguments
give $\{ p_1, p_2, p_4\} =\{ 3, 5, 13\} $, a contradiction. So
$r_{p_4}= 3$ and then $p_4=7$. Basing on the above arguments and
$p_1,\dots , p_t$ being distinct, we have $|B_{j_3}|\not= 1$ for
$j_3\in \{ 0,1,2\} \setminus \{ j_1, j_2\}$. So $t=6$ and
$|B_{j_3}|=2$ by $|A|+|B_0|+|B_1|+|B_2|=t\le 6$. Hence,
$B_{j_3}=\{ 5,6 \}$. Similar to the arguments in Subcase 2.1,  $\{
r_{p_1}, r_{p_2}, r_{p_5}/3, r_{p_6}/3 \} $ is a coverable
sequence. That is, $\{ 2, 4, r_{p_5}/3, r_{p_6}/3 \} $ is a
coverable sequence. By $p_i\notin \{ p_3, p_4 \} =\{ 7,13\} $
$(i=5,6)$ and Lemma \ref{lemprimitive}, we have $r_{p_i}\not= 3,
6, 12$ $(i=5,6)$. If $3\mid r_{p_5}/3$ and $3\mid r_{p_6}/3$, then
by Lemma \ref{lemcover1}, $\{ 2, 4\}$ is a coverable sequence, a
contradiction with \eqref{a3}. If $3\nmid r_{p_5}/3$ and  $3\mid
r_{p_6}/3$, then  by Lemma \ref{lemcover1}, $\{ 2, 4, r_{p_5}/3\}$
is a coverable sequence. By \eqref{a3}, we have $r_{p_5}/3=1,2,4$,
a contradiction with $r_{p_5}\not= 3, 6, 12$. Similarly, we can
derive a contradiction if $3\mid r_{p_5}/3$ and $3\nmid
r_{p_6}/3$. Hence,  $3\nmid r_{p_5}/3$ and  $3\nmid r_{p_6}/3$. By
$r_{p_i}\not= 3, 6, 12$  $(i=5,6)$, we have $r_{p_5}/3\ge 8$ and $
r_{p_6}/3\ge 8$. Since $\{ 2, 4, r_{p_5}/3, r_{p_6}/3 \} $ is a
coverable sequence, it follows from \eqref{a3} that
$r_{p_5}/3=r_{p_6}/3=8$. That is, $r_{p_5}=r_{p_6}=24$. By  Lemma
\ref{lemprimitive},
$p_5=p_6=241$, a contradiction.\\

{\bf Subcase 2.3:} $|A|=3$. Since $|A|+|B_0|+|B_1|+|B_2|=t\le 6$
and $|B_j|\ge 1$ $(j=0,1,2)$, it follows that $t=6$ and $|B_j|=1$
$(j=0,1,2)$. Without loss of generality, we may assume that
$$ A=\{ 1 ,2, 3 \} ,\quad B_{j}=\{ 4+j \}, \quad j=0,1,2.$$
Similar to the  arguments in Subcase 2.1, for each $4\le j\le 6$,
$\{ r_{p_1}, r_{p_2}, r_{p_3}, r_{p_j}/3 \} $ is a coverable
sequence. It follows from \eqref{a3} that
\begin{equation}\label{a8} \frac 1{r_{p_1}}+\frac 1{r_{p_2}}+\frac 1{r_{p_3}}+\frac
1{r_{p_j}/3}\ge 1,\quad 4\le j\le 6.\end{equation} Given $4\le
j\le 6$. If $3\mid r_{p_j}/3$, then by Lemma \ref{lemcover1}, $\{
r_{p_1}, r_{p_2}, r_{p_3}\} $ is a coverable sequence, a
contradiction with $\{ r_{p_1}, \dots , r_{p_t}\} $ being a
minimal coverable sequence.  Hence, $3\nmid r_{p_j}/3$. By  Lemma
\ref{lemprimitive}, there exists $4\le j\le 6$ with $r_{p_j}/3\ge
2^3$. Without loss of generality, we may assume that $r_{p_4}/3\ge
2^3$. Again, by  Lemma \ref{lemprimitive},
\begin{equation}\label{a9}\frac 1{r_{p_1}}+\frac 1{r_{p_2}}
+\frac 1{r_{p_3}}+\frac 1{r_{p_4}/3}\le \frac 12+\frac 14+\frac
18+\frac 18=1.\end{equation} In view of \eqref{a8} and \eqref{a9},
we have
$$ \{ r_{p_1}, r_{p_2}, r_{p_3}, r_{p_4}/3 \} =\{ 2, 4, 8, 8 \}
.$$ It follows from Lemma \ref{lemprimitive} that $$\{ p_1, p_2,
p_3, p_4\} =\{ 3, 5, 17, 241 \} .$$
 If $r_{p_5}/3\ge 2^3$, then by the
similar arguments, we have $$\{ p_1, p_2, p_3, p_5\} =\{ 3, 5, 17,
241 \} ,$$ a contradiction with $p_4\not= p_5$. So $r_{p_5}/3<
2^3$. By  Lemma \ref{lemprimitive}, we have $r_{p_5}/3\in \{ 1, 4
\}$. Similarly, $r_{p_6}/3\in \{ 1, 4 \}$. It follows from Lemma
\ref{lemprimitive} that $\{ p_5, p_6 \} =\{ 7, 13\} $. Therefore,
$$p_1\cdots p_t= 3\cdot 5\cdot 7\cdot 13\cdot 17\cdot 241=5592405.$$
This completes the proof of Lemma \ref{lemcover2}.\end{proof}

\begin{lemma}\cite{XGSun2010}\label{lemcover3} Let $a$ and $m$ be
 two positive integers. If there exists an integer $\ell \ge 1$
such that $(a-2^\ell , m)=1$, then there is a positive proportion
of positive odd integers in $\{ mh+a : h=0,1,\dots \}$ that can be
written as $p+2^k$, where $p\in \mathcal{P}$ and $k\in
\mathbb{N}$.
 \end{lemma}

\begin{proof}[Proof of Theorem \ref{thm1b}] Let $m,a$ be two positive integers
such that \begin{equation}\label{t1}\{ mh+a : h=0,1,\dots \}
\setminus \mathcal{U}\end{equation} has asymptotic density zero.
Then $2\mid m$ and $2\nmid a$, otherwise, the even integers in $\{
mh+a : h=0,1,\dots \}$ has asymptotic density $(2m)^{-1}$ or
$m^{-1}$, a contradiction with \eqref{t1}. Let $p_1,\dots ,p_t$ be
all distinct odd prime divisors $p$ of $m$ for which there exists
an integer $\ell \ge 1$ such that $a\equiv 2^\ell \pmod{p}$.  Let
$a_i\ge 1$ $(1\le i\le t)$ be integers with $a\equiv 2^{a_i}
\pmod{p_i}$. By Lemma \ref{lemcover3}, $(a-2^\ell , m)>1$ for all
integers $\ell \ge 1$. This implies that for each positive integer
$\ell$, there exists $1\le i\le t$ such that $p_i\mid a-2^\ell$,
i.e. $p_i\mid 2^{a_i}-2^\ell$, and so $\ell \equiv
a_i\pmod{r_{p_i}}$. It follows that for each integer $\ell$, we
have $\ell \equiv a_i\pmod{r_{p_i}}$ for some $1\le i\le t$. This
means that $\{ p_1,\dots ,p_t\} $ is a well constructed prime set.
By Lemma \ref{lemcover2}, $t\ge 6$ and $p_1\cdots p_t\ge 5592405$,
and $t=6$ if and only if $p_1\cdots p_t=5592405$. It follows that
$\omega (m)=1+t\ge 7$ and $m\ge 2p_1\cdots p_t\ge 11184810$ , and
$\omega (m)=7$ if and only if $m= 11184810$. By Lemma
\ref{firstap}, $$\{ 11184810s + 992077 : s=0,1, \dots \} \subseteq
\mathcal{U} .$$ It follows that $\min m=11184810$ and $\min \omega
(m)=7$.
\end{proof}

\begin{proof}[Proof of Corollary \ref{cor2}] In the proof of Theorem
\ref{thm1b}, replacing \eqref{t1} by
$$\{ mh+a : h=0,1,\dots \} \subseteq \mathcal{U},$$
we can obtain a proof of Corollary \ref{cor2}.
\end{proof}

\begin{proof}[Proof of Theorem \ref{thm1c}] Let $\{ 11184810h+b : h=0,1,\dots \}$ be  a
longest quasi-non-representable infinite arithmetic progression.
Then $2\nmid b$. By Lemma \ref{lemcover3}, we have $(b-2^k,
11184810)>1$ for all positive integers $k$.  Let $a$ be the
integer with $0\le a<11184810$ such that $b\equiv
a\pmod{11184810}$. Then $$(a-2^k, 11184810)=(b-2^k, 11184810)>1$$
for all  integers $1\le k\le 24$. Since $2\nmid b$, we have
$2\nmid a$. A calculation shows that odd integers $a$ with $0\le
a<11184810$ and $(a-2^k, 11184810)>1$ for all integers $1\le k\le
24$ are in the list \eqref{list}.  Since
$$ \{ 11184810h+b : h=0,1,\dots \} \subseteq \{ 11184810h+a : h=0,1,\dots
\} ,$$ it is enough to prove that $\{ 11184810h+a : h=0,1,\dots
\}$ is  a longest quasi-non-representable infinite arithmetic
progression for all $a$ in the list \eqref{list}.

Let $a$ be an integer in the list \eqref{list}. One can verify
that $(a-2^k, 11184810)>1$ for all integers $1\le k\le 24$. Since
$a$ is odd and $(11184810/2)\mid 2^{24}-1$, it follows that
$(a-2^k, 11184810)>1$ for all positive integers $k$.

If
$$ \{ 11184810h+a : h=0,1,\dots
\}  \subseteq \{ mh+a' : h=0,1,\dots \},$$ where $\{ mh+a' :
h=0,1,\dots \}$ is quasi-non-representable, then $m\le 11184810$
and $a'>0$. On the other hand, by Theorem \ref{thm1b}, $m\ge
11184810$. So $m=11184810$. In view of $a'>0$, $0\le a<11184810$
and
$$ \{ 11184810h+a : h=0,1,\dots
\}  \subseteq \{ 11184810h+a' : h=0,1,\dots \} ,$$ we have $a=a'$.
Hence
$$\{ 11184810h+a : h=0,1,\dots
\}  = \{ mh+a' : h=0,1,\dots \}.$$

The remaining thing is to prove that for each $a$ in the list
\eqref{list},
\begin{equation*} \{ 11184810h+a : h=0,1,\dots
\}  \setminus  \mathcal{U}\end{equation*} has asymptotic density
zero.

Let
$$n\in \{ 11184810h+a : h=0,1,\dots
\}  \setminus  \mathcal{U}.$$ Clearly, $n$ is odd by $2\nmid a$.
Then there exist $p\in \mathcal{P}$ and $k\in \mathbb{N}$ such
that $n=p+2^k$. Thus,
$$(p,  11184810)=(n-2^k, 11184810)=(a-2^k, 11184810)>1.$$ It
follows that $p\in \{ 3, 5, 7, 13, 17, 241 \} $. Therefore,
\begin{eqnarray*} &&\{ 11184810h+a : h=0,1,\dots
\}  \setminus  \mathcal{U}\\
&\subseteq & \{ 2^k+p : k\in \mathbb{N}, p\in \{ 3, 5, 7, 13, 17,
241 \} \} \end{eqnarray*} has asymptotic density zero.
\end{proof}

\begin{proof}[Proof of Corollary \ref{cor2a}] Suppose that $$\{ 11184810h+b : h=0,1,\dots \} \subseteq
\mathcal{U}.$$ By $b\in \mathcal{U}$, we have $b>0$. Let $a$ be
the integer with $0\le a<11184810$ such that $b\equiv
a\pmod{11184810}$. As the proof of Theorem \ref{thm1c}, $a$ is in
the list \eqref{list}.

Conversely, suppose that $b\ge 0$ and $b\equiv a\pmod{11184810}$
for some $a$ in the list \eqref{list}. Noting that $b\ge 0$ and
$0\le a<11184810$, we have
$$ \{ 11184810h+b : h=0,1,\dots \} \subseteq \{ 11184810h+a : h=0,1,\dots
\}.$$ The remaining thing is to prove that
\begin{equation}\label{aa99} \{ 11184810h+a : h=0,1,\dots
\}  \subseteq  \mathcal{U}.\end{equation} Let
$$n\in \{ 11184810h+a : h=0,1,\dots
\} .$$ Clearly, $n$ is odd by $2\nmid a$. Suppose that $n\notin
\mathcal{U}$. Then there exist $p\in \mathcal{P}$ and $k\in
\mathbb{N}$ such that $n=p+2^k$. As the proof of Theorem
\ref{thm1c}, we have  $p\in \{ 3, 5, 7, 13, 17, 241 \} $. Let
$k'\equiv k\pmod{24}$, $1\le k'\le 24$. Noting that
 $$2^{24}\equiv 1\pmod{11184810/2},$$
we have
$$2^{k'}\equiv 2^k\pmod{11184810/2}.$$
It follows that
$$2^{k'}\equiv 2^k\pmod{11184810}.$$
Thus,
$$a\equiv p+2^k\equiv p+2^{k'}\pmod{11184810}.$$
A calculation shows that for $a$ in the list \eqref{list}, $1\le
k'\le 24$ and $p\in \{ 3, 5, 7, 13, 17, 241 \} $,
$$a\not\equiv p+2^{k'}\pmod{11184810},$$
a contradiction. Hence, $n\in \mathcal{U}$. Therefore,
\eqref{aa99} holds and so
$$\{ 11184810h+b : h=0,1,\dots \} \subseteq
\mathcal{U}.$$ This completes the proof of Corollary \ref{cor2a}.
\end{proof}

\begin{proof}[A new proof of Theorem \ref{thm1a}]  Suppose that Conjecture
A is true. Then $\mathcal{U}$ can be written as
$$\mathcal{U}=\{ m_0h+a_0 : h=0,1,\dots \} \cup W,$$
where $m_0, a_0$ are two positive integers and $W$ has asymptotic
density zero. By Theorem \ref{thm1b}, $m_0\ge 11184810$. It
follows that the asymptotic density $m_0^{-1}$ of $\mathcal{U}$ is
less than or equal to $11184810^{-1}$. In view of Theorem
\ref{thm1c}, the asymptotic density of $\mathcal{U}$ is more than
$11184810^{-1}$, a contradiction.

This completes the proof of Theorem \ref{thm1a}.\end{proof}

\section{ Proofs of Theorems \ref{thm2} and \ref{thm3}}  \label{thm2sec}

\begin{proof}[Proof of Theorem \ref{thm2}]
Suppose that $$a\in \bigcup_{i\in I} A_i.$$ Say
$$a\in \{ mh+b: h=0,1,\dots \} \subseteq \mathcal{U}.$$
By Lemma \ref{lemcover3}, for any positive integer $k$, we have
$(b-2^k, m)>1$. It follows that for any positive integer $k$, we
have $(a-2^k, m)=(b-2^k, m)>1$.

Conversely, suppose that there is an integer $m>1$ such that for
any positive integer $k$, $(a-2^k, m)>1$. We choose an integer
$k_0$ with $2^{k_0-1}> \max\{ a, m\} $. Let
$$n\in \{ 2^{k_0} mh+a : h=0,1,\dots \} .$$
Suppose that $n\notin \mathcal{U}$.  Then $n$ can be represented
as $p+2^k$, $p\in \mathcal{P}$, $k\in \mathbb{N}$. By $2\nmid a$,
we have $p\ge 3$. Since $(p, m)=(n-2^k, m)=(a-2^k, m)>1$, it
follows that $p\mid m$ and $p\mid a-2^k$. If $k\ge k_0$, then
$$a\equiv n\equiv p+2^k\equiv p\pmod{2^{k_0}}.$$
It follows from $2^{k_0-1}> \max\{ a, m\} $ and $p\mid m$ that
$a=p$, a contradiction with $p\mid a-2^k$ and $p\ge 3$.
 Hence, $k\le k_0-1$. Since $$a\equiv n\equiv p+2^k\pmod{2^{k_0}}$$
and $$1\le a\le \max\{ a, m\}<2^{k_0}, \quad 1<p+2^k\le
m+2^{k_0-1}<2^{k_0},$$ it follows that $a=p+2^k$, a contradiction
with $a\in \mathcal{U}$. So $n\in \mathcal{U}$. That is,
$$\{ 2^{k_0} mh+a : h=0,1,\dots \} \subseteq \mathcal{U}.$$
It follows that
$$a\in \bigcup_{i\in I} A_i.$$

This completes the proof of Theorem \ref{thm2}.\end{proof}

\begin{proof}[Proof of Theorem \ref{thm3}]

Let $$a\in \bigcup_{i\in I} A_i.$$ Then $a$ is odd by
$A_i\subseteq \mathcal{U}$ $(i\in I)$. By Theorem \ref{thm2},
there is an integer $m>1$ such that for each positive integer $k$,
$(a-2^k, m)>1$. Without loss of generality, by $2\nmid a$ we may
assume that $m$ is a squarefree odd integer. We choose an integer
$k_0$ with $2^{k_0-1}> \max\{ a, m\} $. As in the proof of Theorem
\ref{thm2}, we have
$$\{ 2^{k_0} mh+a : h=0,1,\dots \} \subseteq \mathcal{U}.$$
This implies that
$$a\in \bigcup_{i\in J} A_i.$$
Hence
$$\bigcup_{i\in I} A_i\subseteq \bigcup_{i\in J}
A_i.$$ Since $$\bigcup_{i\in J} A_i\subseteq \bigcup_{i\in I}
A_i,$$ it follows that
$$\bigcup_{i\in I} A_i=\bigcup_{i\in J} A_i.$$

Assume that  there are infinitely many Mersenne primes. We choose
a Mersenne prime $q=2^r-1$ with $2^{r-2}> \max\{ a, m\} $. Let
$$n\in \{ mqh+a : h=0,1,\dots \} .$$
Suppose that  $n\notin \mathcal{U}$.  Then $n$ can be represented
as $p+2^k$, $p\in \mathcal{P}$, $k\in \mathbb{N}$. Then $$a\equiv
n\equiv p+2^k\pmod{q}.$$ Let $k'\equiv k\pmod{r}$ with $0\le k'\le
r-1$. Noting that $2^r\equiv 1\pmod q$, we have
\begin{equation}\label{g2}a\equiv n\equiv p+2^{k'}\pmod{q}.\end{equation} By $2\nmid
a$, we have $p\ge 3$. In view of $(p, m)=(n-2^k, m)=(a-2^k, m)>1$,
we have $p\mid m$.
 Since
$$1\le a\le \max\{ a, m\}<2^{r-2}<q,$$ $$ 1<p+2^{k'}\le
m+2^{r-1}\le 2^{r-2}+2^{r-1}<q,$$ it follows from \eqref{g2} that
$a=p+2^{k'}$. Noting that $a$ and $p$ are both odd, we have $k'\ge
1$, a contradiction with
$$a\in \bigcup_{i\in I} A_i\subseteq \mathcal{U}.$$
Hence,  $n\in \mathcal{U}$. It follows that
$$\{ mqh+a : h=0,1,\dots \} \subseteq \mathcal{U}.$$
This means that $$a\in \bigcup_{i\in K} A_i.$$ So
$$\bigcup_{i\in I} A_i\subseteq \bigcup_{i\in K}
A_i.$$ Since $$\bigcup_{i\in K} A_i\subseteq \bigcup_{i\in I}
A_i,$$ it follows that
$$\bigcup_{i\in I} A_i=\bigcup_{i\in K} A_i.$$

This completes the proof of Theorem \ref{thm3}.\end{proof}

\section*{Acknowledgments}

This work was supported by the National Natural Science Foundation
of China, Grant No. 12171243  and a project funded by the Priority
Academic Program Development of Jiangsu Higher Education
Institutions..

\end{document}